\theoremstyle{plain}
\newtheorem{theorem}{Theorem}[section]
\newtheorem{lemma}[theorem]{Lemma}
\newtheorem{corollary}[theorem]{Corollary}
\newtheorem{proposition}[theorem]{Proposition}
\theoremstyle{definition}
\newtheorem{definition}[theorem]{Definition}
\newtheorem{remark}[theorem]{Remark}
\newtheorem{example}[theorem]{Example}
\DeclareMathOperator{\subf}{Sf}  
\DeclareMathOperator{\rf}{Rf} 
\DeclareMathOperator{\Prop}{\Phi} 
\newcommand{\X}{X}
\newcommand{\Y}{Y}
\DeclareMathOperator{\Lbl}{\Lambda} 
\newcommand{\op}[1]{\widehat{#1}} 
\newcommand{\opM}[1]{\widehat{\vphantom\wedge\smash{#1}}} 
\newcommand{\Logicname}[1]{\ensuremath{\mathsf{#1}}}
\newcommand{\GTS}{\Logicname{GTS}\xspace}
\newcommand{\FBS}{\Logicname{FBS}\xspace}
\newcommand{\CTL}{\Logicname{CTL}\xspace}
\newcommand{\ATL}{\Logicname{ATL}\xspace}
\newcommand{\LTL}{\Logicname{LTL}\xspace}
\newcommand{\card}{\ensuremath{\mathsf{card}\xspace}}
\newcommand{\tlimbound}{\Gamma} 
\begin{document}
\title{Bounded game-theoretic semantics for modal mu-calculus}
\author[1]{Lauri Hella}
\author[2]{Antti Kuusisto}
\author[3]{Raine R\"{o}nnholm}
\affil[1]{Tampere University, Finland}
\affil[2]{University of Helsinki and Tampere University, Finland}
\affil[3]{Universit\'{e} Paris-Saclay, France}

\date{}

\date{}

\maketitle

\begin{abstract}
\noindent
We introduce a new game-theoretic semantics (GTS) for the modal mu-calculus. Our so-called bounded GTS replaces parity games with alternative evaluation games where only finite paths arise; infinite paths are not needed even when the considered transition system is infinite. The novel games offer alternative approaches to various constructions in the framework of the mu-calculus. For example, they have already been successfully used as a basis for an approach leading to a natural formula size game for the logic. While our main focus is introducing the new GTS, we also consider some applications to demonstrate its uses. For example, we consider a natural model transformation procedure that reduces model checking games to checking a single, fixed formula in the constructed models, and we also use the GTS to identify new alternative variants of the mu-calculus with PTime model checking.
\end{abstract}
%


\section{Introduction}

The modal $\mu$-calculus \cite{Kozen83} is a well-known formalism that plays a
central role in, e.g., program verification.  The standard
semantics of the $\mu$-calculus is based on fixed points, but
the system has also a well-known game theoretic
semantics (\GTS) that makes use of parity games.
The related games generally involve infinite
plays, and the parity condition is used for determining
the winner (see, e.g., \cite{bradfield} for further details and a 
general introduction to the $\mu$-calclus).

\paragraph{The agenda and contributions of this article.}

In this article we present an alternative 
game-theoretic semantics for the modal $\mu$-calculus.
Our so-called \emph{bounded} \GTS is
based on games that resemble the standard semantic games for
the $\mu$-calculus, but there is an extra feature
that ensures that the plays within the novel framework
always end after a finite number of rounds.
Thereby only finite paths arise in related 
evaluation games \emph{even when investigating infinite transition systems}.
Thus there is no need to keep track of the parity
condition, so in that sense
the games we present in this article simplify the standard framework.
Furthermore, they offer an alternative perspective on the $\mu$-calculus, as we show that our
semantics is equivalent to the standard one. 
In the novel games, the evaluation of a fixed point formula begins
by one of the players declaring an ordinal number; the verifying player declares ordinals for $\mu$-formulae
and the falsifying player for $\nu$-formulae.
The declared ordinal is then lowered as the game proceeds, and
since ordinals are well-founded, the game will
indeed end in finite time, i.e.,
after a finite number of game steps.
In general, infinite ordinals are needed in the games, but
finite ordinals suffice in finite models.
While the bounded \GTS provides a new perspective on 
the standard modal {$\mu$-calculus}, our approach also leads 
naturally to a range of alternative semantic systems
that are not equivalent to
the standard semantics. Indeed, we divide the framework of bounded semantics into 
subsystems dubbed \emph{$\Gamma$-bounded semantics} for different ordinals $\Gamma$.
Here $\Gamma$ provides
a strict upper limit for the ordinals that can be used during the game play.
For each $\Gamma$-bounded semantics, we define also a compositional semantics and 
prove the game-theoretic and compositional versions equivalent.

If only finite ordinals are allowed, meaning $\Gamma = \omega$, we
obtain the \emph{finitely bounded} \GTS, which is an interesting system itself.
While this semantics is
equivalent to the standard case in finite models, the general expressive powers differ.
Indeed, we will show that the $\mu$-calculus under finitely bounded $\GTS$ does not
have the finite model property. As another interesting property, we
observe that the set of validities of the $\mu$-calculus under finitely bounded semantics is
strictly contained in the set of standard validities.

We then introduce yet another class of variants of the bounded \GTS consisting of the
systems of \emph{$f$-bounded semantics}. In the $\Gamma$-bounded
semantics, each $\mu$ and $\nu$-formula is associated with an ordinal of its own, while in
the $f$-bounded semantics this scheme is relaxed and only two ordinals are used, one for all $\mu$-formulae 
and another one for all $\nu$-formulae. The particular ordinals fixed in the beginning of the game
depend on the particular variant of $f$-bounded semantics. We prove PTime-completeness of 
the model checking problem of a range of simple yet expressive systems of $f$-semantics.
The result concerns both data and combined complexity. In addition to semantic
studies, we use \GTS to identify a 
canonical reduction of $\mu$-calculus model checking instances 
to checking a single,
uniform formula in the model obtained by the reduction. 

\paragraph{Further motivation of the study.}

While the formal results listed above are an important part of our study, the
focus of our article is mainly on the \emph{conceptual development} of the theory of
the $\mu$-calculus and related systems, not so much the more technical directions.
While some of the technical results we obtain have straighforward and \emph{obvious implicit similarities} to 
existing notions, such as finite approximants of fixed points, we
believe the systematic, formal and conceptual study
initiated in this article is justified.

Indeed, we believe the bounded \GTS in general can be a fruitful framework for various 
further developments.
The setting provides an alternative perspective to parity
games, replacing infinite plays with games based on
finitely many rounds only, thereby leading to a
conceptually interesting territory to be explored further. The fragments with PTime-model
checking we identify serve as an example of the various 
possibilities.
Furthermore, it is worth noting here, e.g., that the difference between
the standard and bounded \GTS for the $\mu$-calculus is
analogous to the relationship between while-loops and for-loops;
while-loops are iterated possibly infinitely long, whereas for-loops 
run for $k\in\mathbb{N}$ rounds, where $k$ can
generally be an input to the loop. Finally, we argue that the 
new semantics can quite often make formulae easier to read.

\paragraph{Notes on related work.}

There already exist several works where simple variants of
the bounded semantics have been considered in the context of
temporal logics with a significantly simpler recursion 
mechanisms than that of the $\mu$-calculus. The papers \cite{atl}, \cite{acmtrans2018}
consider a bounded semantics for the Alternating-time temporal logic \ATL, and \cite{atl2}, \cite{infcomp} extend the related study to
the extension $\ATL^+$ of \ATL.
See also \cite{time2017}, \cite{tcs2019}. Part of the original 
motivation behind the studies in \cite{atl}, \cite{acmtrans2018}, \cite{atl2}, \cite{infcomp}
(as well as the current article)
relates to work with the direct aim of understanding
variants and fragments of the general, expressively 
Turing-complete logic presented in \cite{turingcomplete}.
It is also worth mentioning that the work in the 
present article has 
already been made essential use of in 
constructing a canonical
formula size game for the $\mu$-calculus in \cite{sizegame}.

There is a whole range of earlier but closely
related logical studies that make use of notions 
with similar intuitions to the ones
behind the bounded semantics of this paper. 
Indeed,  for logics with time bounds, see, e.g.,
the paper \cite{toplas/AlurH98} on finitary fairness and the article \cite{Kupferman2007} relating to
promptness in Linear temporal logic \LTL. We also mention here the
work related to \emph{bounded model checking}, see, e.g., \cite{BiereCCZ99},
\cite{PenczekWZ02} and \cite{Zhang15}.
Finally, the article \cite{vardi} is one example of an early work that
uses explicit `clocking' of fixed point formulae in (a variant of)
the $\mu$-calculus, thereby involving some
ideas that bear a similarity to some
features used also in the present paper.





\section{Preliminaries}

\subsection{Syntax}

Let $\Prop$ be  a set of \emph{proposition symbols} and $\Lbl$ a set of \emph{label symbols}. 
Formulae of the modal $\mu$-calculus are generated by
the following grammar:
\begin{align*}
	\varphi ::= p \mid \neg p \mid \X \mid \varphi\vee\varphi \mid \varphi\wedge\varphi \mid \Diamond\varphi \mid \Box\varphi
			\mid \mu\X\varphi \mid \nu\X\varphi
\end{align*}
where $p\in\Prop$ and $\X\in\Lbl$.

%

Let $\varphi$ be a formula of the $\mu$-calculus.
The set of nodes in the syntax tree of $\varphi$ is denoted by $\subf(\varphi)$. All of these nodes correspond to some subformula of $\varphi$, but the same subformula may have several occurrences in the syntax tree of $\varphi$, as for example in the case of $p\vee p$. We  always distinguish between different occurrences of the same subformula, and thus we always assume that the position in the syntax tree of $\varphi$ is known for any
given subformula of $\varphi$. 
We also use the following notation:
\begin{multline*}
	\subf_{\mu\nu}(\varphi) := \\
	\{\theta\in\subf(\varphi) \mid \theta=\mu\X\psi \text{ or } \theta=\nu\X\psi \text{ for some }\psi\in\subf(\varphi) 
	\text{ and }X\in \Lambda\}.
\end{multline*}


\subsection{Standard compositional semantics}

\emph{A Kripke model} $\mathcal{M}$ is a tuple $(W, R, V)$, where $W$ is a nonempty set, $R$ a binary relation over $W$ and $V:\Phi\rightarrow\mathcal{P}(W)$ a valuation for proposition symbols in $\Prop$.
An \emph{assignment} $s:\Lbl\rightarrow\mathcal{P}(W)$ for $\mathcal{M}$ maps labels $\X$ to subsets of $W$.

\begin{definition}
Let $\mathcal{M}=(W,R,V)$ be a Kripke model, $w\in W$. Let $\varphi$ be a
formula of the $\mu$-calculus. 
\emph{Truth of $\varphi$ in $\mathcal{M}$ and $w$ under 
assignment $s$},
denoted by $\mathcal{M},w\vDash_s\varphi$, 
is defined as in standard modal logic for $p$, $\neg p$, $\vee$, $\wedge$, $\Diamond$, $\Box$.
%
%
%
%
The truth condition for label symbols is defined with respect to the assignment $s$:
\begin{itemize}
\item $\mathcal{M},w\vDash_s\X$ iff $w\in s(\X)$.
\end{itemize}
To deal with $\mu$ and $\nu$, we define an operator $\op{\varphi}_{\X,s}:\mathcal{P}(W)\rightarrow\mathcal{P}(W)$ such that
	$\op{\varphi}_{\X,s}(A) = \{w\in W\mid \mathcal{M},w\vDash_{s[A/\X]}\varphi\}$,
%
where $s[A/X]$ is the assignment that sends $X$ to $A$ and
treats other label symbols the same way as $s$.
%
The operators $\op{\varphi}_{\X,s}$ are always
monotone and thereby have least
and greatest fixed points.
%
\noindent
The semantics for the operators $\mu\X$ and $\nu\X$ is as follows:
\begin{itemize}
\item $\mathcal{M},w\vDash_s\mu\X\psi$ iff $w$ is in the least fixed point of the operator $\opM{\psi}_{\X,s}$.
\item $\mathcal{M},w\vDash_s\nu\X\psi$ iff $w$ is in the greatest fixed point of the operator $\opM{\psi}_{\X,s}$.
\end{itemize}
\end{definition}

A label symbol $\X$ is said to occur \emph{free} in a formula $\varphi$ if it occurs in $\varphi$ but is not a subformula of any subformula of $\varphi$ of the form $\mu\X\psi$ or $\nu\X\psi$. A formula $\varphi$ is
called a \emph{sentence} if it does not contain any free label symbols. 
Truth of a
sentence $\varphi$ is independent of assignments $s$, so 
we may simply write $\mathcal{M},w\vDash\varphi$
instead of $\mathcal{M},w\vDash_s\varphi$.


\subsection{Alternating reachability
games}\label{alternatingreachabilitysection}

The \emph{alternating reachability game} problem is 
defined as follows.
The input to the problem is a finite
pointed model $(\mathcal{M},w)$, i.e., $\mathcal{M}$ is 
Kripke model and $w$ a state in it. We assume the
vocabulary of $\mathcal{M}$ contains the 
proposition symbols $p_B$ and $q_B$.
The game is played by two
players, $A$ and $B$, starting from
the state $w$. In each round, one of the players moves (if possible) to 
some state that can be
directly reached in one step from the current state via the accessibility 
relation; if $q_{B}$
holds in the current state, then $B$ moves, and
otherwise $A$ moves. 
If the players 
reach a state where $p_{B}$ holds,
then the game ends and $\mathrm{B}$ wins. 
If a player cannot make the required
move in some state (meaning the state is a dead end), then the game ends 
and that player loses and the other
player wins. If the game does not end in a
finite number of moves, then $\mathrm{A}$ wins.
The alternating reachability game problem yields the
answer \emph{yes} on the input $(\mathcal{M},w)$ 
iff $\mathrm{B}$ has a winning strategy in the game.

It is clear the game can be played also on infinite models.
We let $\mathcal{AR}$ denote the class of all positive instances of
the alternating reachability game problem, including infinite models.
The corresponding class of
finite instances is $\mathcal{AR}_{\mathit{fin}}$.
The following observation is well known.

\begin{proposition}\label{alternatingreachabilityproposition}
Let $\mathcal{M}$ be a Kripke model with one 
accessibility relation and propositional
vocabulary $\{p_\mathrm{B},q_B\}$. Let $w$ be a state in $\mathcal{M}$.
Then we have
$
(\mathcal{M},w)\in\mathcal{AR}$ iff $\mathcal{M},w\vDash\chi,
$
where $\chi := 
\mu X\bigl(p_B\vee (q_B \wedge \Diamond X)\vee(\neg q_{B}
\wedge \Box X)\bigr)$.
\end{proposition}

\section{Bounded game-theoretic semantics}

Recall that the general 
idea of game-theoretic semantics (\GTS) is that
truth of a formula $\varphi$ is 
checked in a model $\mathcal{M}$ via playing a game where a proponent
player (Eloise) attempts to show that $\varphi$ holds in $\mathcal{M}$
while an opponent player (Abelard) tries to
establish the opposite---that $\varphi$ is false.
In this section we define a \emph{bounded game-theoretic semantics} for
the $\mu$-calculus, or bounded \GTS. The semantics shares some features 
with a similar \GTS for the \emph{Alternating-time temporal logic} (\ATL)
defined in \cite{atl} (see also \cite{atl2}). 

\subsection{Bounded evaluation games}

Let $\varphi$ be a sentence of the $\mu$-calculus and $\X\in\subf(\varphi)$. The \emph{reference formula of $\X$},
denoted $\rf(\X)$, is the unique
subformula of $\varphi$ that \emph{binds} $\X$.
That is, $\rf(\X)$ is of the form $\mu\X\psi$ or $\nu\X\psi$
and there is no other operator $\mu X$ or $\nu X$
in the syntax tree on the path from $\rf(\X)$ to $\X$.
%
Since $\varphi$ is a
sentence, every label symbol has a reference formula (and the reference formula is by definition unique for each label symbol).

\begin{example}\label{ex: Sentence}
Consider the sentence 
    $\varphi^* := \nu X\Box\mu Y\bigl(\Diamond Y \vee (p\wedge X)\bigr)$.
Here we have $\rf(X)=\varphi^*$ and $\rf(Y)=\mu Y(\Diamond Y \vee (p\wedge X))$.
\end{example}

\begin{definition}
Let $\mathcal{M}$ be a model, $w_0\in W$, $\varphi_0$ a sentence and $\tlimbound>0$ an ordinal. We define the \emph{$\tlimbound$-bounded evaluation game} $\mathcal{G}=(\mathcal{M},w_0,\varphi_0,\tlimbound)$ as follows.
The game has two players, \emph{Abelard} and \emph{Eloise}. The \emph{positions} of the game are of the form $(w,\varphi,c)$, where $w\in W$, $\varphi\in\subf(\varphi_0)$ and
	$c:\subf_{\mu\nu}(\varphi_0)\;\rightarrow\; \{\gamma\mid\gamma\leq\tlimbound\}$
is a \emph{clock mapping}. We call the value $c(\theta)$ the \emph{clock value of} $\theta$ (for $\theta\in\subf_{\mu\nu}(\varphi_0)$).

The game begins from the \emph{initial position} $(w_0,\varphi_0,c_0)$, where $c_0(\theta)=\tlimbound$ for every $\theta\in\subf_{\mu\nu}(\varphi_0)$. The game is then played according to the following rules:
\begin{itemize}
\item In a position $(w,p,c)$ for some $p\in\Prop$, Eloise wins the game if $w\in V(p)$. Otherwise Abelard wins the game.
\item In a position $(w,\neg p,c)$ for some $p\in\Prop$, Eloise wins the game if $w\notin V(p)$. Otherwise Abelard wins the game.
\item In a position $(w,\psi\vee\theta,c)$, Eloise selects whether the next position of the game is $(w,\psi,c)$ or $(w,\theta,c)$.
\item In a position $(w,\psi\wedge\theta,c)$, Abelard selects whether the next position of the game is $(w,\psi,c)$ or $(w,\theta,c)$.
\item In a position $(w,\Diamond\psi,c)$,
Eloise selects some $v\in W$ s.t. $wRv$ and the
next position is $(v,\psi,c)$. If
there is no such $v$, then Abelard wins the game.
\item In a position $(w,\Box\psi,c)$, Abelard selects some $v\in W$ s.t. $wRv$ and the next position is $(v,\psi,c)$. If there is no such $v$, then Eloise wins the game.
\item In a position $(w,\mu\X\psi,c)$, Eloise chooses an ordinal
$\gamma<\tlimbound$. Then the game continues from the position $(w,\psi,c[\gamma/\mu\X\psi])$. Here $c[\gamma/\mu\X\psi]$ is
the clock mapping that sends $\mu\X\psi$ to $\gamma$ and treats other formulae as $c$.
\item In a position $(w,\nu\X\psi,c)$, Abelard chooses an ordinal $\gamma<\tlimbound$. Then the game continues from the position $(w,\psi,c[\gamma/\nu\X\psi])$.

\item Suppose that the game is in a position $(w,\X,c)$. 
Let $c(\rf(\X))=\gamma$.
\begin{enumerate}
\item Suppose that $\rf(\X)=\mu\X\psi$ for some $\psi$.
\begin{itemize}
\item If $\gamma=0$, then Abelard wins the game.
\item Else, Eloise must select some $\gamma'<\gamma$, and the game continues from the position $(w,\psi,c')$, where 
\begin{itemize}
\item $c'(\mu\X\psi)=\gamma'$,
\item $c'(\theta)=\Gamma$ 
for all $\theta\in\subf_{\mu\nu}(\varphi_0)$ s.t. $\theta\in\subf(\psi)$,
\item $c'(\theta)=c(\theta)$ for all other $\theta\in\subf_{\mu\nu}(\varphi_0)$.
\end{itemize}
\end{itemize}
\item Suppose that $\rf(\X)=\nu\X\psi$ for some $\psi$.
\begin{itemize}
\item If $\gamma=0$, then Eloise wins the game.
\item Else, Abelard must select some $\gamma'<\gamma$, and the game continues from the position $(w,\psi,c')$, where 
\begin{itemize}
\item $c'(\nu\X\psi)=\gamma'$,
\item $c'(\theta)=\Gamma$ 
for all $\theta\in\subf_{\mu\nu}(\varphi_0)$ s.t. $\theta\in\subf(\psi)$,
\item $c'(\theta)=c(\theta)$ for all other $\theta\in\subf_{\mu\nu}(\varphi_0)$.
\end{itemize}
\end{itemize}
\end{enumerate}
\end{itemize}
The positions  where one of the players wins the game are called \emph{ending positions}.
The execution of the rules related to a position of the game
constitutes one \emph{round} of the game. The number of
rounds in a play of the game is called the \emph{length of the play}.
We call the ordinals $\gamma<\tlimbound$ \emph{clock values}
and the ordinal $\tlimbound$ the \emph{clock value bound}.
(We note that only rounds with formulae of type $\mu X\psi$, $\nu X\psi$
and $X$ affect clock values.)

%
%
%
\end{definition}

Now, we observe that in \GTS, we have no need for assignments $s$.
A label symbol in $X\in\Lambda$ is simply a marker that
points to a node (that node being the formula $\rf(X)$) in the syntax tree of the sentence $\varphi_0$. Hence label symbols are conceptually quite different in \GTS and compositional semantics.
Indeed, the operators $\mu X$ (respectively $\nu X$) can be
given a natural reading relating to \emph{self-reference}. In the
formula $\mu X\psi$, the operator $\mu X$ is \emph{naming} the
formula $\psi$ with the name $X$.
The atoms $X$ inside $\psi$ are, in turn,
\emph{claiming} that $\psi$ holds, i.e., referring back to
the formula $\psi$.
The difference between $\mu$ and $\nu$ is that $\mu X \psi$
relates to \emph{verifying} the
formula $\psi$ while $\nu X \psi$ is
associated with preventing the
falsification of $\psi$, i.e., \emph{defending} $\psi$.
Therefore, if $N(\psi)$ denotes a
natural language reading of $\psi$, then the natural language 
reading of $\mu X\psi$ states that ``\emph{we can verify the claim
named $X$ which asserts that $N(\psi)$}''. An analogous
reading can be given to $\nu X\psi$. This scheme of reading  
recursive formulae
via self-reference is from \cite{turingcomplete}, \cite{final}.

\begin{example}\label{ex: Game}
Consider the Kripke model $\mathcal{M}^*=(W,R,V)$, where we have $W=\{w_i\mid i\in\mathbb{N}\}$, $R=\{(w_0,w_i)\mid i\geq 1\}\cup\{(w_{i+1},w_i)\mid i\geq 0\}$ and $V(p)=\{w_0\}$.


\begin{center}
\begin{tikzpicture}[
	scale=1.1,
	state/.style={draw, circle, rounded corners, font=\small}
	]
	\node at (0,1.3) {$\mathcal{M}^*$:};
	\node at (3,1.3) [state] (0) {$p$};
	\node at (1,0) [state] (1) {\phantom{$p$}};
	\node at (2,0) [state] (2) {\phantom{$p$}};
	\node at (3,0) [state] (3) {\phantom{$p$}};
	\node at (4,0) [state] (4) {\phantom{$p$}};
	\node at (5,0) (5) {};
	\node at (5.5,0) {\dots};
	\node at (5,0.7) {\dots};
	\draw[-latex] (0) to (1);
	\draw[-latex] (0) to (2);
	\draw[-latex] (0) to (3);
	\draw[-latex] (0) to (4);
    \draw[-latex] (0) to (4.8,0.3);
	\draw[-latex] (2) to (1);
	\draw[-latex] (3) to (2);	
	\draw[-latex] (4) to (3);
	\draw[-latex] (5) to (4);
	\draw[-latex, bend left] (1) to (0);
	\node[right=3pt of 0] {$w_0$};
	\node[below=3pt of 1] {$w_1$};
	\node[below=3pt of 2] {$w_2$};
	\node[below=3pt of 3] {$w_3$};
	\node[below=3pt of 4] {$w_4$};
\end{tikzpicture}
\end{center}

Recall the sentence $\varphi^*=\nu X\Box\mu Y(\Diamond Y \vee (p\wedge X))$ from Example~\ref{ex: Sentence} and consider the evaluation game $\mathcal{G}^*=(\mathcal{M}^*,w_0,\varphi^*,\omega)$.
In $\mathcal{G}^*$, Abelard first announces a clock value $n<\omega$ for $\rf(X)$ and then makes a jump from the intial state $w_0$ (with a $\Box$-move). 
Next Eloise announces some clock value $m<\omega$ for $\rf(Y)$. Then
she can, by repeated $\vee$-moves, 
jump in the model (making a $\Diamond$-move) and loop back to the
formula $\rf(Y)$; each time she loops back, she
needs to lower the value of $m$. If Eloise at 
some point chooses the right disjunct, Abelard can
either check if $p$ true in the current state or
loop back to $\rf(X)$. In the latter
case, the value of $n$ is lowered, but the
value of $m$ is reset
back to $\omega$ (allowing Eloise to
choose a fresh value $m$ next time).

The game eventually ends when (1) the clock value of $\rf(X)$ goes to zero, whence Abelard loses; when (2) the clock value of $\rf(Y)$ goes to zero, whence Eloise loses; or when (3) Abelard chooses the left conjunct, whence Eloise wins iff $p$ is true at the current state.
%
%
We will return to this game in Example~\ref{ex: Strategy}.
\end{example}

\begin{proposition}\label{the: finite games}
Let $\mathcal{G}=(\mathcal{M},w,\varphi,\tlimbound)$ be a bounded evaluation game. Every play of $\mathcal{G}$ ends in a finite
number of rounds.
\end{proposition}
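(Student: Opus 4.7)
The plan is to define a well-founded ranking of positions that strictly drops at every round; since no well-founded order admits an infinite descending chain, every play must then end after finitely many rounds. Order the fixed-point subformulas $\subf_{\mu\nu}(\varphi_0)=\{\theta_1,\dots,\theta_n\}$ in any top-down fashion in the syntax tree, meaning that whenever $\theta_i$ is a proper ancestor of $\theta_j$ we insist $i<j$. To each position $(w,\varphi,c)$ I would then attach the pair
\[
  M(w,\varphi,c)\;=\;\bigl(c(\theta_1),\ldots,c(\theta_n),\,h(\varphi)\bigr),
\]
where $h(\varphi)$ is the height of $\varphi$ in the syntax tree of $\varphi_0$, and compare such pairs lexicographically in $\mathrm{Ord}^n\times\mathbb{N}$, which is well-founded.

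Verifying that $M$ strictly decreases at each round splits into three cases. A boolean or modal move leaves $c$ untouched and replaces $\varphi$ by one of its proper subformulas, so the clock tuple stays put while $h(\varphi)$ drops. A label step at $(w,X,c)$ with $\rf(X)=\theta_i$ strictly decreases $c(\theta_i)$ by rule, and may only reset clocks of proper descendants of $\theta_i$ to $\Gamma$; by the outer-first ordering these descendants all sit at indices $>i$, so coordinates $1,\dots,i-1$ remain unaltered and coordinate $i$ strictly shrinks, yielding a lexicographic drop.

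The delicate case is a fixed-point entry at $(w,\mu X\psi,c)$, where only the clock at $\mu X\psi$ is overwritten, by some $\gamma<\Gamma$, and a priori nothing guarantees that $\gamma$ is strictly smaller than the previously stored value. The main obstacle, and hence the key step, is to establish the invariant that \emph{at every position $(w,\varphi,c)$ reachable in the game and every $\theta\in\subf_{\mu\nu}(\varphi_0)\cap\subf(\varphi)$, we have $c(\theta)=\Gamma$.} The invariant is trivial at the initial position, and one checks by induction on rounds that each move preserves it: boolean and modal moves simply copy $c$ while shrinking the current formula; a fixed-point entry modifies $c$ only at $\mu X\psi$, which does not lie in $\subf(\psi)$; and a label step to the body $\psi$ of $\rf(X)$ explicitly resets every fixed-point subformula of $\psi$ to $\Gamma$. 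With the invariant in hand one has $c(\mu X\psi)=\Gamma>\gamma$ at the entry while all other clocks are unchanged, so the lexicographic rank drops; hence no play can produce an infinite descending sequence in $\mathrm{Ord}^n\times\mathbb{N}$, and every play terminates after finitely many rounds.
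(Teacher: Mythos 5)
Your proof is correct and follows essentially the same route as the paper's: a lexicographic ordering on tuples of clock values with outer fixed-point subformulas listed first, made well-founded by the well-foundedness of the ordinals. Your version is in fact more carefully executed, since you isolate and prove the reset invariant ($c(\theta)=\Gamma$ for every $\mu\nu$-subformula $\theta$ of the current formula) that the paper leaves implicit in its ``it is easy to see'' step, and you package the argument as a single globally strictly decreasing rank rather than per-branch non-increasing tuples.
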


\begin{proof}
For each positive integer $k$, 
let $\prec_k$ denote the ``canonical lexicographic order'' of
$k$-tuples of ordinals. 
That is, $(\gamma_1,\dots,\gamma_k)\prec_k (\gamma_1',\dots,\gamma_k')$ iff
there exists some $i\leq k$ such that $\gamma_i<\gamma_i'$
and $\gamma_j = \gamma_j'$ for all $j<i$.
Consider a branch in the syntax tree of $\varphi$. 
Let $\psi_1,\dots ,\psi_k\in\subf_{\mu\nu}(\varphi)$ be the $\mu\nu$-formulae occurring on this branch in this order 
(starting from the root).
In each round of the game, each such sequence $(\psi_1,\dots ,\psi_k)$
is associated with the $k$-tuple $(c(\psi_1),\dots,c(\psi_k))$ of clock values 
(that are ordinals less or equal to $\Gamma$).
It is easy to see that if $c$ and $c'$ are clock mappings such that $c'$
occurs later than $c$ in
the game, then we have $(c'(\psi_1),\dots,c'(\psi_k))\preceq_k(c(\psi_1),\dots,c(\psi_k))$.
Also, every time a transition from some label $X$ to 
the reference formula $\rf(X)$ is made, there is at least one branch
where the $k$-tuple (for the relevant $k$) of
clock values becomes strictly lowered (in
relation to $\prec_k$).
As ordinals are well-founded, it is clear
that the game must end after finitely many rounds.
%
\end{proof}

Each evaluation game $\mathcal{G}$ can naturally be associated with a \emph{game tree}
$T(\mathcal{G})=(P_\mathcal{G},E_\mathcal{G})$,
where $P_\mathcal{G}$ is the set of positions $(v,\psi,c)$ of $\mathcal{G}$ 
and $E_\mathcal{G}$ is the successor position relation.
$T(\mathcal{G})$ is formed by
beginning from the initial position and adding
transitions to all possible successor positions.
This procedure is then repeated from the
successor positions until an ending
position is reached. In the game tree, the initial
position is of course the root and ending
positions are leafs. Complete branches correspond to possible plays of the game. 
Due to Proposition~\ref{the: finite games}, the game tree of any
bounded evaluation game is well-founded, i.e., it does not
contain infinite branches. However, if the clock value bound $\tlimbound$ is infinite,
then the out-degree of some of the nodes of the
game tree can be infinite.
%
%
%

%

\subsection{Game-theoretic semantics}


\begin{definition}
Let $\mathcal{G}=(\mathcal{M},w_0,\varphi_0,\tlimbound)$ be an evaluation game. A \emph{strategy} $\sigma$ for Eloise in $\mathcal{G}$ is a partial mapping on the set of those positions $(w,\varphi,c)$ of the game where Eloise needs to make a move such that: 
$\sigma(w,\psi\vee\theta,c)\in\{\psi,\theta\}$,
$\sigma(w,\Diamond\psi,c)\in\{v\in W \mid wRv\}$,
$\sigma(w,\mu\X\psi,c)\in\{\gamma \mid \gamma<\tlimbound\}$,
and $\sigma(w,X,c)\in\{\gamma \mid \gamma<c(\rf(\X))\}$
where $\rf(\X)$ is of the form $\mu\X\psi$.
%
%
%
%
%
%
%
%
%
We say that Eloise plays according to $\sigma$ if she makes all her choices according to 
$\sigma$
%
%
%
and that $\sigma$ is a \emph{winning strategy} if Eloise always wins when playing according to $\sigma$. 
\end{definition}



%
%


\begin{definition}
Let $\mathcal{M}=(W,R,V)$ be a
model, $w\in W$, $\varphi$ a sentence and $\tlimbound>0$ an ordinal. 
We define truth of $\varphi$ in $\mathcal{M}$
and $w$ according to \emph{$\tlimbound$-bounded game theoretic semantics}, $\mathcal{M},w\Vdash^\tlimbound\!\varphi$, as follows:
\[
	\mathcal{M},w\Vdash^\tlimbound\!\varphi 
	\text{\; iff \; Eloise has a winning strategy in } (\mathcal{M},w,\varphi,\tlimbound).
\]
\end{definition}

\begin{example}\label{ex: Strategy}
Recall the game $\mathcal{G}^*$ from Example~\ref{ex: Game}. 
We define a strategy for Eloise as follows. After Abelard has made a transition to some state $w_j$, Eloise chooses $j$ for the clock value of $\rf(Y)$ and jumps in the model until reaching again $w_0$. She chooses the right disjunct at $w_0$, whence she either wins (since $w_0\in V(p)$) or Abelard needs to lower the clock value of $\rf(X)$ and the clock value of $\rf(Y)$ gets reset back to $\omega$.
Clearly this is a winning strategy Eloise and thus $\mathcal{M}^*,w_0\Vdash^\omega\!\varphi^*$.

From the structure of the evaluation games for $\varphi^*$ we find an interpretation for the meaning of $\varphi^*$: ``we can infinitely repeat the process where first (1) an arbitrary transition is made, and then (2) we can reach a state where $p$ is true and the process can be continued from (1)''.
Hence the clock value chosen for $\rf(Y)$ is
intuitively a ``commitment'' on
\emph{how many rounds at
most it will take to reach a state where $p$ holds}.
The clock value for $\rf(X)$, on the
other hand, is a ``challenge'' on
\emph{how many times $p$ must be reached}.
Indeed, in models where $p$ can be
reached only finitely many---say $n$---times
from the initial state, Abelard can win by
choosing $n+1$ as the
initial clock value for $\rf(X)$.


\end{example}

\section{Bounded compositional semantics}

In this section we define a compositinal semantics based on 
\emph{ordinal approximants} of fixed point operators.
Let $\mathcal{M}=(W,R,V)$ be a Kripke model,  $F:\mathcal{P}(W)\rightarrow\mathcal{P}(W)$ an operator and $\gamma$ an ordinal. We define the sets $F_\mu^\gamma$ and $F_\nu^\gamma$  recursively as follows:
\begin{align*}
	&F_\mu^0 := \emptyset 
	    &\text{ and } \quad &F_\nu^0 := W. \\
	&F_\mu^\gamma := F\bigl(F_\mu^{\gamma-1}\bigr)
    	&\text{ and } \quad &F_\nu^\gamma := F\bigl(F_\nu^{\gamma-1}\bigr),
		\;\text{ if $\gamma$ is a successor ordinal}. \\
	&F_\mu^\gamma := \bigcup_{\delta<\gamma}F_\mu^\delta
        &\text{ and } \quad &F_\nu^\gamma := \bigcap_{\delta<\gamma}F_\nu^\delta,
        \;\;\quad\text{ if $\gamma$ is a limit ordinal}.
\end{align*}

\begin{definition}
Consider a model $\mathcal{M}$ with a state $w$ and a
related assignment $s$.
We obtain \emph{$\tlimbound$-bounded
compositional semantics}
for the $\mu$-calculus by
defining truth of $p$, $\neg p$, $\vee$, $\wedge$, $\Diamond$, $\Box$ and $\X$ recursively as in 
the standard compositional semantics and
treating the $\mu$ and $\nu$-operators as follows:
\begin{itemize}
\item $\mathcal{M},w\vDash^\tlimbound_s\mu\X\psi$ iff $w\in(\opM{\psi}_{\X,s,\tlimbound})_\mu^\tlimbound$,
\item $\mathcal{M},w\vDash^\tlimbound_s\nu\X\psi$ iff $w\in(\opM{\psi}_{\X,s,\tlimbound})_\nu^\tlimbound$,
\end{itemize}
where the operator $\op{\varphi}_{\X,s,\tlimbound}:\mathcal{P}(W)\rightarrow\mathcal{P}(W)$ is defined such that
\begin{align*}
	\op{\varphi}_{\X,s,\tlimbound}(A) = \{w\in W\mid \mathcal{M},w\vDash^\tlimbound_{s[A/\X]}\varphi\}.
\end{align*}
\end{definition}

%
%

\noindent
The semantics of the $\mu$ and $\nu$-operators can be equivalently given as follows:
\begin{itemize}
\item $\mathcal{M},w\vDash^\tlimbound_s\mu\X\psi$ iff there exists some $\gamma<\tlimbound$ s.t. $w\in(\opM{\psi}_{\X,s,\tlimbound})_\mu^{\gamma+1}$.
\item $\mathcal{M},w\vDash^\tlimbound_s\nu\X\psi$ iff $w\in(\opM{\psi}_{\X,s,\tlimbound})_\nu^{\gamma+1}$ for every $\gamma<\tlimbound$.
\end{itemize}
If $\tlimbound$ is a limit ordinal, we can replace the superscripts $\gamma+1$ above by $\gamma$.

%
%

\medskip

We say that a formula is in \emph{normal form} if 
each label symbol in $\Lambda$ occurs in the 
formula at most once in the $\mu$-$\nu$-operators
(but may occur several times on the atomic level).
We let $\varphi'$ denote a normal form variant of $\varphi$
obtained simply by renaming label symbols where
appropriate.
It is easy to show that $\varphi$ is equivalent to $\varphi'$ with respect to both $\tlimbound$-bounded compositional semantics ($\vDash^\tlimbound$) and $\tlimbound$-bounded \GTS ($\Vdash^\tlimbound$). Therefore, when proving the equivalence of these two semantics, it suffices that we consider sentences that are in normal form.
Indeed, henceforth we assume that all formulae are in this normal form.

\begin{theorem}\label{the: Equivalence of semantics}
Let $\tlimbound$ be an ordinal, $\mathcal{M}$ a Kripke model, $w_0\in W$ and $\varphi_0$ a sentence of the $\mu$-calculus. Now we have
\[
	\mathcal{M},w_0\vDash^\tlimbound\!\varphi_0 \;\text{ iff }\;  \mathcal{M},w_0\Vdash^\tlimbound\!\varphi_0. 
\]
\end{theorem}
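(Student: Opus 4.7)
The plan is to strengthen the statement to an invariant that holds at every position $(w,\varphi,c)$ reachable in the game $(\mathcal{M},w_0,\varphi_0,\tlimbound)$, and then argue by well-founded induction on the game tree, whose well-foundedness is guaranteed by Proposition~\ref{the: finite games}. To each clock mapping $c$ I would associate an assignment $s_c$ on the label symbols free in $\varphi$ by setting
\[
	s_c(X) \;=\; (\opM{\psi_X}_{X,s_c,\tlimbound})_\mu^{c(\rf(X))}
\]
when $\rf(X)=\mu X\psi_X$, and dually with $F_\nu^{c(\rf(X))}$ when $\rf(X)=\nu X\psi_X$. The normal-form assumption guarantees that every label symbol has a unique binder, so this definition is unambiguous by a secondary recursion on the depth of $\mu\nu$-operators above $\varphi$. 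The invariant to be proved is that Eloise has a winning strategy from $(w,\varphi,c)$ iff $\mathcal{M},w\vDash^\tlimbound_{s_c}\varphi$; the theorem follows by specializing to the initial position, where $\varphi_0$ is a sentence and $s_{c_0}$ is vacuous.

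The atomic, Boolean and modal clauses are immediate: each game rule for $p$, $\neg p$, $\psi\vee\theta$, $\psi\wedge\theta$, $\Diamond\psi$, $\Box\psi$ mirrors the corresponding compositional clause once the induction hypothesis is applied to the successor positions, in which the clock mapping (and therefore $s_c$) is unchanged. For $\varphi=\mu X\psi$, Eloise wins iff she can pick some $\gamma<\tlimbound$ such that she wins from $(w,\psi,c[\gamma/\mu X\psi])$; by the inductive hypothesis this is $w\in (\opM{\psi}_{X,s_c,\tlimbound})_\mu^{\gamma+1}$ for some $\gamma<\tlimbound$, which is precisely the equivalent reformulation of $\mathcal{M},w\vDash^\tlimbound_{s_c}\mu X\psi$ stated directly after the definition of bounded compositional semantics. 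The $\nu X\psi$ case is dual.

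The crucial case is $\varphi=X$. Assume $\rf(X)=\mu X\psi$ and $c(\rf(X))=\gamma$; I would then run a secondary induction on $\gamma$. When $\gamma=0$ Abelard wins by the game rule, matching $F_\mu^0=\emptyset$. When $\gamma$ is a successor $\delta+1$, Eloise's optimal choice is $\gamma'=\delta$; applying the main induction hypothesis to the resulting position $(w,\psi,c')$ gives that she wins iff $w\in F(F_\mu^\delta)=F_\mu^\gamma$, as required. When $\gamma$ is a limit, Eloise wins iff some $\gamma'<\gamma$ witnesses $w\in F_\mu^{\gamma'+1}$, and $\bigcup_{\gamma'<\gamma}F_\mu^{\gamma'+1}=F_\mu^\gamma$. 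The case $\rf(X)=\nu X\psi$ is symmetric, with Abelard playing the choosing role.

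The main obstacle will be bookkeeping the correspondence $c\mapsto s_c$ across the clock resets that occur at positions of the form $(w,X,c)$: when control returns to the body $\psi$ of the binder, the clocks on all $\mu\nu$-subformulae of $\psi$ are reset to $\tlimbound$ while the clock of $\rf(X)$ is lowered to the chosen $\gamma'$. I need to verify that the new mapping $c'$ induces an assignment $s_{c'}$ that (i) agrees with $s_c$ on every label symbol free in $\mu X\psi$, (ii) sends $X$ to $F_\mu^{\gamma'}$, and (iii) handles the re-initialised inner clocks correctly, since in the compositional picture they correspond to recomputing the inner $\tlimbound$-bounded approximants from scratch. Once this invariant on $(c,s_c)$-pairs is set up cleanly and shown to be preserved by every game transition, the well-founded induction on the game tree delivers the equivalence.
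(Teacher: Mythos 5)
Your proposal is correct and follows essentially the same route as the paper: the invariant tying the clock value $c(\rf(\X))$ to the assignment $s_c(\X)=(\opM{\psi}_{\X,s_c,\tlimbound})_\mu^{c(\rf(\X))}$ (resp.\ the $\nu$-approximant) is exactly the paper's condition ($\star$), and your case analysis at label positions---successor versus limit clock values, using monotonicity of the approximants, with the observation that the reset inner clocks are irrelevant because the corresponding labels are not free in the binder's body---matches the paper's argument. The only difference is presentational: the paper proves the two implications separately, constructing Eloise's strategy forward from the initial position for one direction and running the well-founded induction on the game tree for the other, whereas you fold both into a single iff-invariant (with $s_c$ defined explicitly rather than existentially quantified) established by one well-founded induction.
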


\begin{proof}(Sketch.) 
We present here a proof sketch
highlighting the main ideas. For a fully detailed proof, see the appendix.
The key in both directions 
of the proof
is the following condition $(\star)$ which is a property
satisfied/unsatisfied by positions $(w,\varphi,c)$ in the evaluation game $\mathcal{G}=(\mathcal{M},w_0,\varphi_0,\tlimbound)$:
\begin{itemize}
\item[($\star$)]
There is an assignment $s$ such that $\mathcal{M},w\vDash_s^{\Gamma}\varphi$, and
for each $\X\in\subf(\varphi_0)$:
\begin{enumerate}
\item
$s(X) = (\opM{\psi}_{\X,s,\tlimbound})_\mu^{\gamma}$ if $c(\rf(\X)) = \gamma$
and $\rf(\X) = \mu\X\psi$,
\item
$s(X) = (\opM{\psi}_{\X,s,\tlimbound})_\nu^{\gamma}$ if $c(\rf(\X)) = \gamma$
and $\rf(\X) = \nu\X\psi$.
\end{enumerate}
\end{itemize}
Note that this condition essentially relates the clock values $\gamma$ of bounded \GTS to $\gamma$-approximants in the bounded compositional semantics.

Proving the left to right implication, we first note that $(\star)$ holds in the initial position of $\mathcal{G}$ by the assumption $\mathcal{M},w_0\vDash^\tlimbound\!\varphi_0$. 
Then we show that whenever $(\star)$ holds for the current position, Eloise either wins the game in the current position or she can
maintain $(\star)$ to the next position. By maintaining $(\star)$, we obtain a winning strategy since $\mathcal{G}$ ends in a finite number of rounds.

For the other direction of the equivalence, we suppose that Eloise has a winning strategy $\sigma$ in $\mathcal{G}$. Since the game tree of $\mathcal{G}$ is well-founded, we can use well-founded (backwards) induction on the positions in the tree to prove that: if a $(w,\varphi,c)$ in can be reached with $\sigma$, then ($\star$) holds for $(w,\varphi,c)$.
Hence, in particular, $(\star)$ holds in the initial position of $\mathcal{G}$ and thus $\mathcal{M},w_0\vDash^\tlimbound\!\varphi_0$.
%
\end{proof}



Let $\mathcal{M}$ be a model. It is well-known that over  $\mathcal{M}$, each operator related to a formula of the $\mu$-calculus reaches a fixed point in at most $(\card(\mathcal{M}))^+$ iterations, where $(\card(\mathcal{M}))^+$ is the 
successor \emph{cardinal} of $\card(\mathcal{M})$. Thus it is
easy to see that the standard compositional semantics and $(\card(\mathcal{M}))^+$-bounded compositional semantics are
equivalent in $\mathcal{M}$. Hence obtain the following corollary:

\begin{corollary}\label{the: bounded vs standard semantics}
$\tlimbound$-bounded $\GTS$ is equivalent with the standard compositional semantics of the $\mu$-calculus when $\tlimbound\geq (\card(\mathcal{M}))^+$.
\end{corollary}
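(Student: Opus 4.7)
The plan is to combine the preceding Theorem on the equivalence of $\tlimbound$-bounded \GTS and $\tlimbound$-bounded compositional semantics with a direct argument showing that, at $\tlimbound = (\card(\mathcal{M}))^+$, the $\tlimbound$-bounded compositional semantics and the standard compositional semantics coincide in $\mathcal{M}$. By the previous Theorem, $\mathcal{M}, w \Vdash^\tlimbound \varphi_0 \iff \mathcal{M}, w \vDash^\tlimbound \varphi_0$ for sentences in normal form, so it suffices to prove that $\mathcal{M}, w \vDash^\tlimbound_s \varphi \iff \mathcal{M}, w \vDash_s \varphi$ for every formula $\varphi$, every assignment $s$ and every $w \in W$.

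I would proceed by induction on the structure of $\varphi$. The atomic, Boolean and modal cases are immediate, because the corresponding clauses in the definition of $\tlimbound$-bounded compositional semantics are identical to the standard clauses and only invoke the semantics of strict subformulae, which the induction hypothesis handles. The only nontrivial cases are $\varphi = \mu\X\psi$ and $\varphi = \nu\X\psi$. In these cases, the induction hypothesis, applied to $\psi$ for every assignment of the form $s[A/\X]$ with $A \in \mathcal{P}(W)$, implies that the operators $\opM{\psi}_{\X,s,\tlimbound}$ and $\opM{\psi}_{\X,s}$ agree as functions on $\mathcal{P}(W)$. Call this common operator $F$.

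I would then invoke the fact (explicitly recalled in the paragraph preceding the corollary) that the transfinite iteration of any monotone operator on $\mathcal{P}(W)$ reaches its least, respectively greatest, fixed point by stage at most $(\card(\mathcal{M}))^+$. Hence $F_\mu^{\tlimbound}$ equals the least fixed point $\mu F$ of $F$, and $F_\nu^{\tlimbound}$ equals the greatest fixed point $\nu F$, so the $\tlimbound$-bounded truth conditions for $\mu\X\psi$ and $\nu\X\psi$ reduce to the standard fixed-point conditions and the induction step goes through.

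The main (modest) obstacle is to ensure that the induction hypothesis is stated strongly enough: because $\opM{\psi}_{\X,s,\tlimbound}(A)$ depends on the bounded semantics of $\psi$ under $s[A/\X]$, I must phrase it as holding \emph{for every assignment $s$ and every world $w$}, not just for the specific assignment and world appearing in the outer fixed-point iteration. Once this is done, equating the two operators as functions on $\mathcal{P}(W)$ is routine, and the corollary then follows from the previous Theorem together with the easy fact that a formula and its normal-form variant have the same truth value under the standard compositional semantics.
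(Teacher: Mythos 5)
Your proposal is correct and follows essentially the same route as the paper: the paper likewise derives the corollary by combining Theorem~\ref{the: Equivalence of semantics} with the observation (stated in the paragraph immediately preceding the corollary) that every operator arising from a $\mu$-calculus formula reaches its fixed point within $(\card(\mathcal{M}))^+$ iterations, so that the $(\card(\mathcal{M}))^+$-bounded compositional semantics coincides with the standard one. The structural induction you spell out (with the induction hypothesis quantified over all assignments) is exactly the detail the paper compresses into ``it is easy to see,'' so your write-up is simply a more explicit version of the same argument.
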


Also note that, in the special case of \emph{finite models}, it suffices to use finite clock values that are at most the cardinality of the model.


\section{Finitely bounded semantics}

As stated in Corollary~\ref{the: bounded vs standard semantics}, the bounded semantics becomes equivalent with the standard (unbounded) semantics if we set a sufficiently large clock value bound $\tlimbound$. However, using smaller values of $\tlimbound$, we obtain different semantic systems typically nonequivalent to the standard semantics.
We can either set some fixed bound for $\tlimbound$ or use a value that is determined by some parameters---such as the size of the given model and the given formula. In this section we consider the former case; systems
relating to the latter case is examined in Section~\ref{Variants with PTime model checking}.


A particularly interesting case with a fixed value of $\tlimbound$ is the so-called \emph{finitely bounded semantics} where we set $\tlimbound=\omega$ for all evaluation games. In the corresponding \GTS, the players can only announce \emph{finite} clock values.\footnote{Note that the correspondence to for-loops is particularly natural with finitely bounded semantics: iterations can be done up to any finite bound that has to be declared in advance.} 
Finitely bounded semantics will be denoted by \FBS which refers to both game-theoretic and compositional semantics with $\tlimbound=\omega$. 
%
%
In finite models, \FBS is equivalent to the standard semantics, but this equivalence breaks over infinite models; see Example~\ref{ex: FBS} below.

In the example and proofs that follow, we will consider the sentence 
\[
    \varphi_{\mathrm{AF}p}:=\mu\X(p\vee\Box X)
\]
which intuitively means that on every path, $p$ can be reached eventually. 
Note that $\varphi_{\mathrm{AF}p}$ corresponds to the sentence $\mathrm{AF}p$ of \emph{Computation tree logic} \CTL.

\begin{example}\label{ex: FBS}
Recall the model $\mathcal{M}^*$ from Example~\ref{ex: Game}.
Let $\mathcal{M}^\dagger$ be the model that is otherwise identical to $\mathcal{M}^*$, but $V(p)=\{w_1\}$.
Since the state $w_1$ is eventually reached on every path starting from $w_0$, it is easy to see that $\mathcal{M}^\dagger,w_0\models \varphi_{\mathrm{AF}p}$. However, $\mathcal{M}^\dagger,w_0\not\models^\omega \varphi_{\mathrm{AF}p}$ since from $w_0$ there is no finite upper bound on how many transitions 
are needed
to reach $w_1$. Indeed, Abelard has a winning strategy in $(\mathcal{M}^\dagger,w_0,\varphi_{\mathrm{AF}p},\omega)$ since he can win by choosing a transition to $w_{j+1}$ for any clock value $j<\omega$ for $\rf(\X)$---chosen by Eloise.

It is worth noting that $\mathcal{M}^\dagger,w_0\models^{\omega+1} \varphi_{\mathrm{AF}p}$ since if Eloise can choose $\omega$ as the initial clock value for $\rf(\X)$ and then lower it to $j-1$ after Abelard has made a transition to a state $w_j$.
Moreover, we also have $\mathcal{M}^\dagger,w_0\models^\omega \Box\varphi_{\mathrm{AF}p}$ since Eloise will know how many transitions it takes to reach $w_1$ as Abelard has to make the first transition before Eloise must announce a clock value.
\end{example}

In the proofs that follow, we will use negations and
implications of $\mu$-calculus formulae. Such formulae are in
general not included in our official
syntax (in the current paper), but it is straightforward to show
that they can be translated to equivalent formulae in negation normal form.

It is well known that, with standard semantics, the $\mu$-calculus has the \emph{finite model property}, i.e., every satisfiable sentence is satisfied in some finite model (see, e.g., \cite{bradfield}). However, with \FBS this property is lost.

\begin{proposition}\label{the: finite model property}
Modal $\mu$-calculus with finitely bounded semantics does not have the finite model property.
\end{proposition}

\begin{proof}
It is easy to see that $\Box\varphi_{\mathrm{AF}p}\rightarrow \varphi_{\mathrm{AF}p}$ is valid with the standard semantics (this follows from the ``fixpoint property'' $\mathrm{AF}p\leftrightarrow p\vee \mathrm{AXAF}p$ of \CTL). Therefore $\Box\varphi_{\mathrm{AF}p}\wedge\neg \varphi_{\mathrm{AF}p}$ is not satisfiable with the standard semantics. As the standard semantics is equivalent to \FBS in
finite models, $\Box\varphi_{\mathrm{AF}p}\wedge\neg \varphi_{\mathrm{AF}p}$ cannot be satisfied under \FBS in any finite model.
However, $\Box\varphi_{\mathrm{AF}p}\wedge\neg \varphi_{\mathrm{AF}p}$ is satisfiable with \FBS in an infinite model---as demonstrated in Example~\ref{ex: FBS}.
\end{proof}

Moreover, \FBS has the following interesting connection to the standard semantics in relation to valid sentences.

\begin{proposition}\label{the: validities}
The set of validities of the $\mu$-calculus with
finitely bounded semantics is 
strictly included in the set of validities with the standard semantics. 
\end{proposition}

\begin{proof}
For proving the inclusion, let $\varphi$ be a sentence valid under \FBS. Then, in particular, $\neg\varphi$ cannot be satisfied under \FBS in any finite model. Since the standard semantics and \FBS are equivalent in finite models, it follows that $\neg\varphi$ is not satisfied by the standard semantics in any finite model. Due to the finite model property of the standard semantics, $\neg\varphi$ is not satisfied by any model and thus $\varphi$ is valid.
The inclusion is strict as $\Box\varphi_{\mathrm{AF}p}\rightarrow \varphi_{\mathrm{AF}p}$ is valid with the standard semantics but not with \FBS (cf. proof of
Proposition \ref{the: finite model property}).
\end{proof}

We showed in \cite{time2017}, \cite{tcs2019} that the claims of Propositions \ref{the: finite model property}, \ref{the: validities} hold also for the \FBS defined for \CTL and \ATL.
There we also developed a tableau method for showing that the validity problem of \CTL and \ATL with \FBS is decidable and has the same complexity (ExpTime) as with the standard semantics.


\section{Variants with PTime model checking}\label{Variants with PTime model checking}

The bounded $\GTS$ leads naturally to semantic variants of the $\mu$-calculus 
that can quite directly be shown to have PTime complete model checking. The main
point is to make use of the intimate relationship between
alternating Turing machines and semantic games. The novel systems of
semantics we consider resemble the $\Gamma$-bounded semantics but
utilize a simplified way to control
how many times $\mu$ and $\nu$-formulae can be repeated in semantic games.

To present the alternative 
semantic systems in detail, let $f$ be a map
that takes as input a model $\mathcal{M}$, a
point $w$ in the domain $W$ of $\mathcal{M}$ and a
sentence $\varphi$, outputting an ordinal.
We  assume that if $g$ is an isomorphism 
from $\mathcal{M}$ to $\mathcal{M}'$, then $f(\mathcal{M},w,\varphi)
= f(\mathcal{M}',f(w),\varphi)$.\footnote{We note that $f$ is too large to be a set, but
this is unproblematic to our study.} The function $f$ gives
rise to the \emph{simple $f$-bounded} \GTS
defined as follows.

\begin{definition}
Let $\mathcal{M}$ be a Kripke-model, $w\in W$ and $\varphi$ a sentence of the $\mu$-calculus.
The \emph{simple $f$-bounded evaluation game} $\mathcal{G}_f=(\mathcal{M},w,\varphi)$ is
played the same way as the $\Gamma$-bounded
evaluation game $\mathcal{G}_{\Gamma} =(\mathcal{M},w,\varphi,\Gamma)$, but with the following differences on
the way the number of remaining rounds is determined:
\begin{itemize}
\item
Eloise is controlling an ordinal $\gamma_{\exists}$ and Abelard an ordinal $\gamma_{\forall}$.
In the beginning of the game, these ordinals are set to be
equal to $f(\mathcal{M},w,\varphi)$.
\item
Every time a transition is made from some label symbol $X$ to the 
reference formula $\mu X\psi$, Eloise must lower the 
current value of $\gamma_{\exists}$. Similarly, when a 
transition is made from $Y$ to the reference formula $\nu Y \psi'$,
then Abelard must lower $\gamma_{\forall}$. (Note that the
values of $\gamma_{\exists}$ and $\gamma_{\forall}$
are never increased.)
\end{itemize}

If $\gamma_{\exists} = 0$ and we
enter a position where Eloise should lower $\gamma_{\exists}$,
then Eloise loses the game, and
similarly, if $\gamma_{\forall} = 0$ and we enter a position where
Abelard should lower $\gamma_{\forall}$, Abelard loses. In
positions $(\mathcal{M},w',p)$ and $(\mathcal{M},w',\neg p)$ where $p$ is a proposition symbol, winning and losing is
defined in the same way as in $\Gamma$-bounded games.
We define truth of $\varphi$ in $\mathcal{M}$ at $w$
according to the \emph{simple $f$-bounded semantics} 
such that $\mathcal{M},w\Vdash^f\varphi$ iff Eloise has a winning
strategy in the game $\mathcal{G}_f = (\mathcal{M},w,\varphi)$ of
the simple $f$-bounded semantics.
\end{definition}

Henceforth we mostly talk about $f$-bounded semantics instead of simple $f$-bounded
semantics to keep the presentation simpler.

The naturalness and the general 
properties of $f$-bounded semantics of course depend heavily on the choice of $f$.
One of the simpler choices is to define $f\bigl(\mathcal{M},w,\varphi\bigr) = \card(\mathcal{M})
\cdot |\varphi|$ where $|\varphi|$ is the length of $\varphi$, i.e., the number of symbol occurrences.\footnote{Each proposition symbol $p$ and label $X$ counts as one despite the possible
subindices used.} This semantics has the natural property that in
the finite, if the players always lower their ordinal by the
minimum amount $1$, then, if the game
ends due $\gamma_{\exists}$ or $\gamma_{\forall}$ being zero,
then some state-subformula pair must have been repeated.
Furthermore, we can now prove the following result.

\begin{proposition}\label{ptimemodelche}
The  $\mu$-calculus model checking problem is PTime-complete
under simple $f$-bounded semantics
with $f(\mathcal{M},w,\varphi) = \card(\mathcal{M}) \cdot |\varphi|$.
\end{proposition}
\begin{proof}
%
To establish the upper bound, we define a Turing machine running in alternating logarithmic
space that directly simulates the model checking game (i.e.,
the semantic evaluation game) with any input $\mathcal{M},w,\varphi$.
The game positions where Eloise makes a move correspond to existential machine states while Abelard's positions correspond to universal states. We need some kind of a pointer indicating the current world of the Kripke structure 
and another pointer for the current subformula (i.e., node in the
syntax tree). Furthermore, we keep binary
representations of $\gamma_{\exists}$
and $\gamma_{\forall}$ in the memory. These binary strings 
are necessarily logarithmic in the input due to the
choice of $f$. Thus it is easy to see how the
required alternating Turing machine is constructed.

We obtain the lower bound via the alternating reachability game.
Recall Proposition \ref{alternatingreachabilityproposition}
and the formula $\chi$ there.
We will
show that, as
in standard semantics, $\chi$ defines the
winning set of the alternating reachability game also
under our $f$-bounded semantics, i.e., $\chi$
is true in $\mathcal{M}$ at $w$ under our semantics iff
the player $B$ has a
winning strategy in the corresponding alternating reachability game. Indeed, it is
easy to show that when $B$ has a winning
strategy in an alternating reachability game, she can ensure a win so
that no state of the game is visited more than once.
Thus our choice of $f$ for the $f$-bounded
semantics guarantees Eloise has a winning
strategy in the corresponding the semantic game.
And if Eloise has a winning strategy in a semantic game $\mathcal{G}_f(\mathcal{M},w,\chi)$,
then clearly $B$ wins
the corresponding alternating reachability game.
Thus, already with the fixed input formula $\chi$, 
model checking is PTime-hard.
\end{proof}

It is worth
noting here that in fact all
the systems with $f(\mathcal{M},w,\varphi) = \card(\mathcal{M})^k \cdot |\varphi|$
(for different positive integers $k$) have PTime-complete model checking: the proof of Proposition \ref{ptimemodelche} goes through with trivial modifications.

The $f$-bounded semantics with $f(\mathcal{M},w,\varphi) = \card(\mathcal{M}) \cdot |\varphi|$ is
obviously very different in spirit from the standard semantics, and the $f$-bounded semantics itself 
changes as we modify $f$. Also, several further variants of the semantics immediately 
suggest themselves, for example the possibility of setting different limits for Eloise and Abelard, including the possibility of no
limit at all. Also, letting different occurrences
of $\mu$ and $\nu$-formulae be
associated with different clocks similarly to the 
standard semantics, but without resetting the clocks, is one of 
many possible interesting scenarios.

Concerning the case where we do not set clocks at all but allow 
the players to play indefinitely long, winning occurs only 
when an atomic position with a literal (e.g., $p$ or $\neg p$) is reached.
Thus the games are 
not determined, i.e., it is possible that neither player has a winning 
strategy (consider, e.g., the formula $\mu X X$). This \emph{free semantics} for 
modal logic results in a system that is
essentially directly a fragment of the
general, Turing-complete logic $\mathcal{L}$ of \cite{turingcomplete}.
On the other hand, the different `clocking scenarios' described 
above (and further variants thereof) can be naturally imposed on $\mathcal{L}$,
and it would indeed make sense to
study related phenomena in that framework.


\section{Reducing model checking to
alternating \\ reachability}\label{modelchecking}

In this section we study model checking of the $\mu$-calculus for
\emph{fixed sentences}.\footnote{The complexities of the related 
problems are commonly referred to as \emph{data complexity} as 
opposed to the \emph{combined complexity} of the standard
problem where the sentence is not fixed.}
We investigate model checking separately with respect to 
the standard semantics and with respect to $\Gamma$-bounded
semantics, and furthermore, we do not in
general limit to finite models only. Given a sentence
$\varphi$ of the $\mu$-calculus, we use the following notation for the corresponding
model checking problems: 
\begin{itemize}
\item $\mathcal{MC}(\varphi):=\{(\mathcal{M},w)\mid \mathcal{M},w\vDash\varphi\}$, and
\item $\mathcal{BMC}(\varphi):=\{(\mathcal{M},w,\Gamma)\mid 
\mathcal{M},w\vDash^\Gamma\varphi\}$.
\end{itemize}
Furthermore, we denote the restrictions of these classes to finite Kripke models by $\mathcal{MC}_\mathrm{fin}(\varphi)$ and $\mathcal{BMC}_\mathrm{fin}(\varphi)$, respectively.
%
Recalling the relevant notations from Section
\ref{alternatingreachabilitysection}, including the
formula $\chi$, we note, in particular, that $\mathcal{AR}=\mathcal{MC}(\chi)$ and
$\mathcal{AR}_\mathrm{fin}=\mathcal{MC}_\mathrm{fin}(\chi)$.

We will now define natural and easily computable model transformations $J_\varphi$ and $I_\varphi$ such that
for any Kripke model $\mathcal{M}$, state $w$ and ordinal $\Gamma$ we have
\begin{itemize}
\item[(1)]    $(\mathcal{M},w,\Gamma)\in\mathcal{BMC}(\varphi)\;\text{ iff }\;
    J_\varphi(\mathcal{M},w,\Gamma)\in\mathcal{AR}$, and
\item[(2)]    $(\mathcal{M},w)\in\mathcal{MC}(\varphi)\;\text{ iff }\;
    I_\varphi(\mathcal{M},w)\in\mathcal{AR}$.
\end{itemize}
Now, the same equivalences also hold for the finite restrictions
$\mathcal{BMC}_\mathrm{fin}(\varphi)$, $\mathcal{MC}_\mathrm{fin}(\varphi)$
and $\mathcal{AR}_\mathrm{fin}$ of the classes, since $J_\varphi$
and $I_\varphi$ preserve finiteness.
Thus, checking the truth of an arbitrary sentence of the $\mu$-calculus
can be reduced via $I_\varphi$ to checking the truth of the simple
alternation free sentence $\chi$, and this holds both in the
general and in 
the finite case. This result is similar to the ``Measured Collapse Theorem'' in
\cite{vardi}, which also states that checking the truth of any sentence 
$\varphi$ of the $\mu$-calculus can be reduced to checking the truth of 
an alternation free sentence $\varphi'$. However, the result of \cite{vardi}
is not a reduction to $\mathcal{MC}(\psi)$ for a fixed sentence $\psi$;
the sentence $\varphi'$ is actually a translation of $\varphi$ to a different 
logic, called $\mu^\sharp$-calculus, whose semantics is based on an 
additional domain of tuples that can be related to our clock values.

Recall that the game tree of an evaluation
game $\mathcal{G}=(\mathcal{M},w_0,\varphi,\tlimbound)$ is the tree 
$T(\mathcal{G})=(P_\mathcal{G},E_\mathcal{G})$, where $P_\mathcal{G}$ is the set of
positions $(v,\psi,c)$ of $\mathcal{G}$, and $E_\mathcal{G}$ is the successor 
position relation. We consider the following Kripke model that is obtained
 from $T(\mathcal{G})$
by adding proposition symbols encoding winning end positions of Eloise and 
positions in which it is Eloise's turn to move:
$\mathcal{T}_\mathcal{G}=(P_\mathcal{G},E_\mathcal{G},V_\mathcal{G})$,
where $V_\mathcal{G}:\{p_B,q_B\}\to\mathcal{P}(P_\mathcal{G})$
is the valuation
\begin{itemize}
\item $V_\mathcal{G}(p_B)=\{(v,\psi,c)\in P_\mathcal{G}\mid \psi\text{ is a literal and }
\mathcal{M},v\vDash\psi
\}$, 
\item $V_\mathcal{G}(q_B)=\{(v,\psi,c)\in P_\mathcal{G}\mid \psi\text{ is of the form }
\theta\lor\eta,\,\Diamond\theta,\, \mu X\theta, \text{ or $X$ with}$ $\rf(\psi)=\mu X\theta,
\text{ or $\psi$ is a literal and }\mathcal{M},v\nvDash\psi\}$. 
\end{itemize}
Let $r_\mathcal{G}=(w_0,\varphi,c_0)$ be the initial position of $\mathcal{G}$. 
Observe now that, letting Eloise play in the role of $B$ and Abelard in the role of $A$, the alternating
reachability game on the Kripke-model $\mathcal{T}_\mathcal{G}$ with starting state
$r_\mathcal{G}$ is essentially identical with the game $\mathcal{G}$: the positions and the rules for moves are the same, and the winning 
conditions are equivalent.\footnote{For example, in a position $s=(v,\psi,c)$ with $\psi$
a literal such that $\mathcal{M},v\nvDash\psi$, $B$ loses the alternating 
reachability game since $s$ does not have any $E_\mathcal{G}$-successors.}
Thus, defining $J_\varphi(\mathcal{M},w_0,\Gamma):=
(\mathcal{T}_\mathcal{G},r_\mathcal{G})$, and using Theorem 
\ref{the: Equivalence of semantics}, we obtain the first equivalence (1).

The other transformation $I_\varphi$ can now be defined as follows: we let
$I_\varphi(\mathcal{M},w_0):=
J_\varphi(\mathcal{M},w_0,(\card(\mathcal{M}))^+)$. Denote 
$\Gamma^*:=(\card(\mathcal{M}))^+$ below.
By Corollary \ref{the: bounded vs standard semantics} and (1) we have
$$
	(\mathcal{M},w_0)\in\mathcal{MC}(\varphi) \; \text{ iff } \;
	(\mathcal{M},w_0,\Gamma^*)\in\mathcal{BMC}(\varphi) \; \text{ iff } \;
	J_\varphi(\mathcal{M},w_0,\Gamma^*)\in\mathcal{AR},
$$
whence (2) holds.
Furthermore, since $\mathcal{T}_\mathcal{G}$ is finite
whenever the Kripke model $\mathcal{M}$ in $\mathcal{G}$ is finite, $J_\varphi$
and $I_\varphi$ are also reductions from $\mathcal{BMC}_\mathrm{fin}(\varphi)$
and $\mathcal{MC}_\mathrm{fin}(\varphi)$ to $\mathcal{AR}_\mathrm{fin}$ of very
low complexity.\footnote{In fact, assuming that $\Gamma$
is given as an ordered structure, a bisimilar copy of 
$J_\varphi(\mathcal{M},w_0,\Gamma)$ is definable in the input structure 
$(\mathcal{M},w_0,\Gamma)$ by quantifier free first-order formulas.}

It should be noted that the existence of LogSpace-computable reductions from
these model checking problems to $\mathcal{AR}_\mathrm{fin}$ follows
directly from the well-known fact that alternating reachability is a
PTime-complete problem. However, the main point here is that our reductions
$J_\varphi$ and $I_\varphi$ arise in a natural and straightforward way from
the bounded evaluation game, and moreover, they work on infinite Kripke models
as well as on finite ones.



\section{Conclusion and future directions}

Our study has focused on 
conceptual developments relating to the $\mu$-calculus, the 
main result being the new \GTS and its variants.
There are many relevant future research directions; we
mention only a few due to lack of space. 
Firstly, it would be 
interesting to understand 
new \emph{clocking patterns} in general, in addition to the
finitely bounded, the $f$-bounded and the free
semantics discussed above. These investigations could naturally be
pushed to involve more general logics beyond modal logic, possibly 
containing, e.g., operators that modify the underlying models, and
thereby directly linking to the research on the 
general logical framework of \cite{turingcomplete}
and the research program of \cite{turingcomplete} and \cite{final}.

More concretely,
pinpointing the complexity of the satisfiability problem of
the $\mu$-calculus under finitely 
bounded \GTS remains to be done. Also, it would be interesting to
investigate whether the scheme of using 
tuples of ordinals for defining our bounded $\GTS$
can be modified to work with single ordinals in a
natural way. Finally, using ordinals to
reduce arbitrary game arenas to well-founded
trees is in general an interesting 
research direction. Relating to this and the
work in Section \ref{modelchecking}, it would be
particularly interesting to better understand reductions of
general games to
(well-founded) alternating reachability games.


\section*{Appendix}

%

\begin{proof}(Theorem~\ref{the: Equivalence of semantics})
Suppose first that $\mathcal{M},w_0\vDash^\tlimbound\!\varphi_0$. We shall formulate such a strategy for Eloise in the evaluation game $\mathcal{G}=(\mathcal{M},w_0,\varphi_0,\tlimbound)$ that the following condition---called \emph{condition} ($\star$) below---holds in every position $(w,\varphi,c)$ of the game:
%
%
%
%
\begin{itemize}
\item[($\star$)]
There is an assignment $s$ such that $\mathcal{M},w\vDash_s^{\Gamma}\varphi$, and
for each $\X\in\subf(\varphi_0)$:
\begin{enumerate}
\item
$s(X) = (\opM{\psi}_{\X,s,\tlimbound})_\mu^{\gamma}$ if $c(\rf(\X)) = \gamma$
and $\rf(\X) = \mu\X\psi$,
\item
$s(X) = (\opM{\psi}_{\X,s,\tlimbound})_\nu^{\gamma}$ if $c(\rf(\X)) = \gamma$
and $\rf(\X) = \nu\X\psi$.
\end{enumerate}
\end{itemize}
%
%
Note that since we assumed $\varphi_0$ to be in normal form, all different occurrences of a label symbol $\X$ in $\varphi_0$ have the same reference formula. Therefore, in the condition ($\star$), the values $s(\X)$ of each $\X\in\subf(\varphi_0)$ are uniquely defined.
The values $s(Y)$ of label symbols $\Y\in\Lbl\setminus\subf(\varphi_0)$ may be arbitrary.
We then show how Eloise can maintain the condition ($\star$) working inductively
\emph{from the initial position of the game towards ending positions.}
We first observe that the condition ($\star$) holds trivially in the initial position 
since $\mathcal{M},w_0\vDash^\tlimbound\!\varphi_0$ and $\varphi_0$ is a sentence. 
We then establish that in every position $(w,\varphi,c)$ of the game: if ($\star$) holds for $(w,\varphi,c)$,
then Eloise either wins the game or she can \emph{maintain this
condition to the next position of the game.}
%

$\bullet$ Suppose the game is in a position $(w,p,c)$ or $(w,\neg p,c)$. 
If the position is $(w,p,c)$, then by the inductive hypothesis, there is some $s$ such that $\mathcal{M},w\vDash_s^{\Gamma} p$ and thus $w\in V(p)$. Hence Eloise wins the game. The case for the position $(w,\neg p,c)$ is analogous.

$\bullet$ Suppose the game is in $(w,\psi\vee\theta,c)$.  
By the inductive hypothesis, there is some assignment $s$ such that $\mathcal{M},w\vDash_s^{\Gamma}\psi\vee\theta$, i.e., $\mathcal{M},w\vDash_s^{\Gamma}\psi$ or $\mathcal{M}\vDash_s^{\Gamma}\theta$. If the former holds, then Eloise can choose the next position to be $(w,\psi,c)$, and if the latter holds, Eloise can choose the next position to be $(w,\theta,c)$. In both cases ($\star$) holds in the next position of the game.

$\bullet$ Suppose that the game is in $(w,\psi\wedge\theta,c)$.
By the inductive hypothesis, there is some $s$ such that $\mathcal{M},w\vDash_s^{\Gamma}\psi\wedge\theta$, i.e., $\mathcal{M},w\vDash_s^{\Gamma}\psi$ and $\mathcal{M}\vDash_s^{\Gamma}\theta$. Thus ($\star$) holds in both positions $(w,\psi,c)$ and $(w,\theta,c)$. Hence ($\star$) holds in the next position of the game regardless of the
choice of Abelard.

$\bullet$ Suppose that the game is in $(w,\Diamond\psi,c)$.
By the inductive hypothesis, there is some $s$ such that $\mathcal{M},w\vDash_s^{\Gamma}\Diamond\psi$, i.e., there exists some $v\in W$ s.t. $wRv$ and $\mathcal{M},v\vDash_s^{\Gamma}\psi$. Now Eloise can choose the next position to be $(v,\psi,c)$, and the condition ($\star$) holds there.

$\bullet$ Suppose that the game is in $(w,\Box\psi,c)$.
By the inductive hypothesis, there is some $s$ such that $\mathcal{M},w\vDash_s^{\Gamma}\Box\psi$, i.e., $\mathcal{M},v\vDash_s^{\Gamma}\psi$ for every $v\in W$ such that $wRv$. If there is no $v\in W$  such that $wRv$, then Eloise wins the game. Else ($\star$) holds in every possible next position $(v,\psi,c)$ regardless of the choice of Abelard.

$\bullet$ Suppose that the game is in $(w,\mu\X\psi,c)$. 
By the inductive hypothesis, there is some $s'$ such that $\mathcal{M},w\vDash^\tlimbound_{s'}\mu\X\psi$. Therefore there exists some ordinal $\gamma<\tlimbound$ such that $w\in(\opM{\psi}_{\X,s',\tlimbound})_\mu^{\gamma+1}$. Let $A:=(\opM{\psi}_{\X,s',\tlimbound})_\mu^{\gamma}$, whence we have $w\in\opM{\psi}_{\X,s',\tlimbound}(A)$, i.e., $\mathcal{M},w\vDash^{\tlimbound}_{s'[A/\X]}\psi$. Let $s=s'[A/\X]$, whence $s(X)=(\opM{\psi}_{\X,s',\tlimbound})_\mu^{\gamma} = (\opM{\psi}_{\X,s,\tlimbound})_\mu^{\gamma}$ and $s(\Y)=s'(\Y)$ for all $\Y\in\subf(\varphi_0)\setminus\{\X\}$. Now Eloise can choose $\gamma$ as
the clock value of $\rf(\X)$, and therefore the condition ($\star$) holds in the next position $(w,\psi,c[\gamma/\mu\X\psi])$ of the game.

$\bullet$ Suppose that the game is in $(w,\nu\X\psi,c)$.
By the inductive hypothesis, there is some $s'$ such that $\mathcal{M},w\vDash^\tlimbound_{s'}\nu\X\psi$. Therefore $w\in(\opM{\psi}_{\X,s',\tlimbound})_\nu^{\gamma+1}$ for every $\gamma<\tlimbound$. 
Let $\gamma<\tlimbound$ be the clock value of $\rf(\X)$ chosen by Abelard, and let $A:= (\opM{\psi}_{\X,s',\tlimbound})_\nu^{\gamma}$. Now $w\in\opM{\psi}_{\X,s',\tlimbound}(A)$, i.e., $\mathcal{M},w\vDash^{\Gamma}_{s'[A/\X]}\psi$.
Let $s=s'[A/\X]$, whence $s(X)= (\opM{\psi}_{\X,s',\tlimbound})_\nu^{\gamma} = (\opM{\psi}_{\X,s,\tlimbound})_\nu^{\gamma}$ and $s(\Y)=s'(\Y)$ for all $\Y\in\subf(\varphi_0)\setminus\{\X\}$. Hence ($\star$) holds in the next position $(w,\psi,c[\gamma/\nu\X\psi])$.

$\bullet$ Suppose that the game is in $(w,\X,c)$.
We consider two cases, (i) and (ii).

(i) Suppose first that $c(\rf(\X)) = \gamma$ and $\rf(X) = \mu\X\psi$.
By the inductive hypothesis, there is some $s'$ such that $\mathcal{M},w\vDash^\tlimbound_{s'}\X$ and $s'(\X)= (\opM{\psi}_{\X,s',\tlimbound})_\mu^\gamma$. Hence $w\in s'(\X) = (\opM{\psi}_{\X,s',\tlimbound})_\mu^\gamma$, and thus the  clock value $\gamma$ cannot be $0$.
Suppose first that $\gamma$ is a successor ordinal. Let $A:=(\opM{\psi}_{\X,s',\tlimbound})_\mu^{\gamma-1}$, whence we have $w\in \opM{\psi}_{\X,s',\tlimbound}(A)$, i.e., $\mathcal{M},w\vDash^\tlimbound_{s'[A/\X]}\psi$. Let $s=s'[A/\X]$, whence $s(X)= (\opM{\psi}_{\X,s',\tlimbound})_\mu^{\gamma-1} = (\opM{\psi}_{\X,s,\tlimbound})_\mu^{\gamma-1}$ and $s(\Y)=s'(\Y)$ for all $\Y\in\subf(\varphi_0)\setminus\{\X\}$. Now Eloise can lower the clock value of $\rf(\X)$ from $\gamma$ to $\gamma-1$, whence ($\star$) holds in the next position $(w,\psi,c')$.
Suppose then that $\gamma$ is a limit ordinal. Now $w\in\bigcup_{\delta<\gamma}(\opM{\psi}_{\X,s',\tlimbound})_\mu^\delta$, and thus there is some $\delta<\gamma$ s.t. $w\in(\opM{\psi}_{\X,s',\tlimbound})_\mu^{\delta+1}$. Let $A:= (\opM{\psi}_{\X,s',\tlimbound})_\mu^\delta$, whence $w\in \opM{\psi}_{\X,s',\tlimbound}(A)$. Thus Eloise can lower the clock value of $\rf(\X)$ from $\gamma$ to $\delta$, and then ($\star$) holds in the next position of the game by the same reasoning as above.

(ii) Suppose then that $c(\rf(\X))= \gamma$ and $\rf(\X) = \nu\X\psi$.
By the inductive hypothesis, there is some $s'$ such that $\mathcal{M},w\vDash^\tlimbound_{s'}\X$ and $s'(\X)= (\opM{\psi}_{\X,s',\tlimbound})_\nu^\gamma$, and therefore $w\in (\opM{\psi}_{\X,s',\tlimbound})_\nu^\gamma$. If $\gamma=0$, then Eloise wins the evaluation game. Suppose then that $\gamma\neq 0$ and let $\gamma'<\gamma$ be the time limit chosen by Abelard.
Suppose first that the time limit $\gamma$ is a successor ordinal. Since $\gamma'\leq\gamma-1$ and $\opM{\psi}_{\X,s',\tlimbound}$ is monotone, we have $(\opM{\psi}_{\X,s',\tlimbound})_\nu^{\gamma-1} \subseteq (\opM{\psi}_{\X,s',\tlimbound})_\nu^{\gamma'}$.
Let $A:= (\opM{\psi}_{\X,s',\tlimbound})_\nu^{\gamma'}$, whence $w\in \opM{\psi}_{\X,s',\tlimbound}((\opM{\psi}_{\X,s',\tlimbound})_\nu^{\gamma-1}) \subseteq \opM{\psi}_{\X,s',\tlimbound}(A)$, and thus $\mathcal{M},w\vDash^\tlimbound_{s'[A/\X]}\psi$. Let $s=s'[A/\X]$, whence $s(X)=(\opM{\psi}_{\X,s',\tlimbound})_\nu^{\gamma'}=(\opM{\psi}_{\X,s,\tlimbound})_\nu^{\gamma'}$, and thus ($\star$) holds in the next position $(w,\psi,c')$ of the game.
Suppose then that $\gamma$ is a limit ordinal, whence $\gamma'+1<\gamma$. Now $w\in\bigcap_{\delta<\gamma}(\opM{\psi}_{\X,s',\tlimbound})_\mu^\delta$, and thus, in particular, $w\in(\opM{\psi}_{\X,s',\tlimbound})_\mu^{\gamma'+1}$. Let $A:=(\opM{\psi}_{\X,s',\tlimbound})_\mu^{\gamma'}$, whence $w\in \opM{\psi}_{\X,s',\tlimbound}(A)$, and thus ($\star$) holds in the next position by the same reasoning as above.


We have shown that Eloise can maintain the condition ($\star$) at every position until reaching a position where she wins the game. By Proposition~\ref{the: finite games} the game in guaranteed to end in a finite number of rounds, and thus Eloise will eventually win the game by maintaining the condition ($\star$). Hence Eloise has a winning strategy in $\mathcal{G}$, i.e. $\mathcal{M},w_0\Vdash^\tlimbound\!\varphi_0$.

\medskip

We then consider the converse implication of the theorem.
Suppose that $\mathcal{M},w_0\Vdash^\tlimbound\!\varphi_0$, i.e., Eloise has a winning
strategy $\sigma$ in $\mathcal{G}$. 
We next prove by \emph{well-founded induction}\footnote{Note that, by Proposition~\ref{the: finite games}, the game tree of $\mathcal{G}$ is well-founded.} on the game tree of $\mathcal{G}$ that the following
claim holds for every position $(w,\varphi,c)$ in $T(\mathcal{G})$:
\[
	\text{If } (w,\varphi,c) \text{ can be reached with $\sigma$, then ($\star$) holds for } (w,\varphi,c).
\]
It will then follow, in particular, that ($\star$) holds in the initial position of the game and thus we have $\mathcal{M},w_0\vDash^\tlimbound\varphi_0$.

We make the inductive hypothesis that the implication above holds for
every position $(w',\varphi',c')$ that can occur after the position $(w,\varphi,c)$ in the evaluation game $\mathcal{G}$ (that is, there is a path from the node $(w,\varphi,c)$ to the node $(w',\varphi',c')$ in $T(\mathcal{G})$).
Then we prove the implication above for the position $(w,\varphi,c)$.
%

$\bullet$ Suppose that a position $(w,p,c)$ or $(w,\neg p,c)$ can be reached with $\sigma$. 
Suppose first that $(w,p,c)$ can be reached with $\sigma$. Since $\sigma$ is a winning strategy, we must have $w\in V(p)$. Now $\mathcal{M},w\vDash_s^{\Gamma} p$ for any assignment $s$ and thus the condition ($\star$) holds for $(w,p,c)$. The case for the position $(w,\neg p,c)$ is analogous.

$\bullet$ Suppose that $(w,\psi\vee\theta,c)$ can be reached with $\sigma$. 
Let $(w,\xi,c)$, where $\xi\in\{\psi,\theta\}$, be the next position which is chosen according to $\sigma$. By the inductive hypothesis, there is $s$ such that $\mathcal{M},w\vDash_s\xi$. Therefore $\mathcal{M},w\vDash_s^{\Gamma}\psi\vee\theta$ and thus ($\star$) holds for $(w,\psi\vee\theta,c)$.

$\bullet$ Suppose that $(w,\psi\wedge\theta,c)$ can be reached with $\sigma$. 
Now Abelard can choose the next position of the game to be either $(w,\psi,c)$ or $(w,\theta,c)$. Since both of these positions can be reached with $\sigma$, by the inductive hypothesis, there is $s$ such that $\mathcal{M},w\vDash_{s}^{\Gamma}\psi$ and there is $s'$ such that $\mathcal{M},w\vDash_{s'}^{\Gamma}\theta$. 
By the condition ($\star$), $s'$ must have the same values as $s$ for all label symbols occurring in $\varphi_0$, and thus $\mathcal{M},w\vDash_s^{\Gamma}\theta$. Hence $\mathcal{M},w\vDash_s^{\Gamma}\psi\wedge\theta$ and thus ($\star$) holds for $(w,\psi\wedge\theta,c)$.

$\bullet$ Suppose that $(w,\Diamond\psi,c)$ can be reached with Eloise's strategy. 
Let $(v,\psi,c)$, where $v\in W$ s.t. $wRv$, be the next position that is chosen according to $\sigma$. By the inductive hypothesis, there is some $s$ such that $\mathcal{M},v\vDash_s^{\Gamma}\psi$. Therefore $\mathcal{M},w\vDash_s^{\Gamma}\Diamond\psi$, and thus ($\star$) holds
for $(w,\Diamond\psi,c)$.

$\bullet$ Suppose that $(w,\Box\psi,c)$ can be reached with $\sigma$. 
If there is no $v\in W$ such that $wRv$, then $\mathcal{M},w\vDash_s^{\Gamma}\Box\psi$ for any any assignment $s$ and thus the condition ($\star$) holds for $(w,\Box\psi,c)$. Suppose then
that there is some $v'\in W$ such that $wRv'$.
Now Abelard can choose the next position of the game to be $(v,\psi,c)$ for any $v\in W$ s.t. $wRv$. Since all of these positions can be reached with $\sigma$, we observe by the inductive hypothesis that for every $v\in W$ s.t. $wRv$, there is some $s_v$ such
that $\mathcal{M},v\vDash_{s_v}^{\Gamma}\psi$. Define $s:=s_{v'}$. Since all the assignments $s_v$ have the same values for the label symbols of occurring in $\varphi_0$, we have $\mathcal{M},v\vDash_{s}^{\Gamma}\psi$ for
all $v$ such that $wRv$. Therefore $\mathcal{M},w\vDash_{s}^{\Gamma}\Box\psi$ and
thus ($\star$) holds for $(w,\Box\psi,c)$.

$\bullet$ Suppose that $(w,\mu\X\psi,c)$ can be reached with $\sigma$. 
Let $\gamma<\tlimbound$ the clock value that is chosen by $\sigma$, whence the next
position of the game is $(w,\psi,c[\gamma/\mu\X\psi])$. By the inductive hypothesis, there
is some $s$ such that $\mathcal{M},w\vDash^\tlimbound_s\psi$ and $s(X)=(\opM{\psi}_{\X,s,\tlimbound})_\mu^\gamma$. 
Hence it holds that $w\in\opM{\psi}_{\X,s,\tlimbound}(s(\X))=\opM{\psi}_{\X,s,\tlimbound}((\opM{\psi}_{\X,s,\tlimbound})_\mu^\gamma)=(\opM{\psi}_{\X,s,\tlimbound})_\mu^{\gamma+1}$, and thus $\mathcal{M},w\vDash_s^{\Gamma}\mu\X\psi$. The assignment $s$ satisfies the requirements of ($\star$) for the position $(w,\mu\X\psi,c)$. Thus the condition ($\star$) holds for $(w,\mu\X\psi,c)$.

$\bullet$ Suppose that $(w,\nu\X\psi,c)$ can be reached with $\sigma$.
Since Abelard may choose any $\gamma<\tlimbound$ as the clock value, the next position of the game can be $(w,\psi,c[\gamma/\nu\X\psi])$ for any $\gamma<\tlimbound$. All of these positions can be reached with $\sigma$. Hence, by the inductive hypothesis, for every $\gamma<\tlimbound$, there is some $s_\gamma$ such that $\mathcal{M},w\vDash^\tlimbound_{s_\gamma}\psi$ and $s_\gamma(X)=(\opM{\psi}_{\X,s_\gamma,\tlimbound})_\nu^\gamma$. 
Note that all the assignments $s_{\gamma}$ (for different values $\gamma<\Gamma$) agree on all other label symbols in $\varphi_0$ except $\X$.
Define $s:=s_0$ and let $\gamma<\tlimbound$.
Now $w\in\opM{\psi}_{\X,s_\gamma,\tlimbound}(s_\gamma(\X))=\opM{\psi}_{\X,s_\gamma,\tlimbound}((\opM{\psi}_{\X,s_{\gamma},\tlimbound})_\nu^\gamma)=(\opM{\psi}_{\X,s_\gamma,\tlimbound})_\nu^{\gamma+1}=(\opM{\psi}_{\X,s,\tlimbound})_\nu^{\gamma+1}$. Since this holds for any $\gamma<\tlimbound$, we have $\mathcal{M},w\vDash_s^{\Gamma}\nu\X\psi$. The assignment $s$ satisfies the requirements  of ($\star$) and thus the condition ($\star$) holds for $(w,\nu\X\psi,c)$.

$\bullet$ Suppose that $(w,\X,c)$ can be reached with $\sigma$. 

%
(i) Suppose first that $\rf(\X)=\mu\X\psi$ and $c(\rf(\X))=\gamma$. Since $\sigma$ is a winning strategy for Eloise, we must have $\gamma\neq 0$. Let $\gamma'<\gamma$ be the clock value chosen according to $\sigma$, whence the next position of the game is $(w,\psi,c')$ where $c'(\mu X\psi) = \gamma'$.
Therefore we observe by the inductive hypothesis that there is a suitable $s'$ such that $\mathcal{M},w\vDash^\tlimbound_{s'}\psi$ and $s'(\X)=(\opM{\psi}_{\X,s',\tlimbound})_\mu^{\gamma'}$.
Suppose first that the time limit $\gamma$ is a successor ordinal. Let $A:=(\opM{\psi}_{\X,s',\tlimbound})_\mu^{\gamma}$, whence $A=\opM{\psi}_{\X,s',\tlimbound}((\opM{\psi}_{\X,s',\tlimbound})_\mu^{\gamma-1})$. Since $\gamma'<\gamma$, we have $\gamma'\leq\gamma-1$. 
As $\mathcal{M},w\vDash^\tlimbound_{s'}\psi$, we have $w\in \opM{\psi}_{\X,s',\tlimbound}(s'(\X))$. Thus $w\in \opM{\psi}_{\X,s',\tlimbound}((\opM{\psi}_{\X,s',\tlimbound})_\mu^{\gamma'}) \subseteq \opM{\psi}_{\X,s',\tlimbound}((\opM{\psi}_{\X,s',\tlimbound})_\mu^{\gamma-1}) = A$. Let $s=s'[A/\X]$, whence $w\in A = s(\X)$ and thus $\mathcal{M},w\vDash^\tlimbound_{s} \X$. Now $s(\X)=(\opM{\psi}_{\X,s',\tlimbound})_\mu^{\gamma}=(\opM{\psi}_{\X,s,\tlimbound})_\mu^{\gamma}$ and $s(\Y)=s'(\Y)$ for all $\Y\in\subf(\varphi_0)\setminus\{\X\}$. Therefore ($\star$) holds  for $(w,\X,c)$.
Suppose then that $\gamma$ is a limit ordinal. Let $A:=(\opM{\psi}_{\X,s',\tlimbound})_\mu^{\gamma}$, whence we have $A=\bigcup_{\delta<\gamma}(\opM{\psi}_{\X,s',\tlimbound})_\mu^{\delta}$. Since $\gamma'<\gamma$ and $\gamma$ is a limit ordinal, $\gamma' +1 <\gamma$.
As $\mathcal{M},w\vDash^\tlimbound_{s'}\psi$, we have $w\in \opM{\psi}_{\X,s',\tlimbound}(s'(\X))$ and thus $w\in \opM{\psi}_{\X,s',\tlimbound}((\opM{\psi}_{\X,s',\tlimbound})_\mu^{\gamma'})= (\opM{\psi}_{\X,s',\tlimbound})_\mu^{\gamma' + 1}\subseteq A$. Let now $s:=s'[A/\X]$, whence ($\star$) holds for $(w,\X,c)$ by similar reasoning as above.

(ii) Suppose then that $\rf(\X)=\nu\X\psi$ and $c(\rf(\X))=\gamma$. Suppose first that $\gamma=0$, and let $s$ be an assignment whose values satisfy the requirements of ($\star$) with respect to the values of $c$. Now, in particular, $s(\X)=(\opM{\psi}_{\X,s,\tlimbound})_\nu^0=W$ and thus trivially $\mathcal{M},w\vDash^\tlimbound_s\X$. Hence the condition ($\star$) holds for $(w,\X,c)$.
Suppose then that $\gamma>0$. Abelard may now choose any $\gamma'<\gamma$, whence the next position of the game is $(w,\psi,c_{\gamma'})$, where $c_{\gamma'}(\nu\X\psi)=\gamma'$. All such positions can be reached with $\sigma$. Hence, by the inductive hypothesis, for every $\gamma'<\gamma$, there is a suitable $s_{\gamma'}$ such that $\mathcal{M},w\vDash^\tlimbound_{s_{\gamma'}}\psi$ and $s_{\gamma'}(\X)=(\opM{\psi}_{\X,s_{\gamma'},\tlimbound})_\nu^{\gamma'}$.
Suppose first that $\gamma$ is a successor ordinal. Let $s':=s_{\gamma-1}$, whence $\mathcal{M},w\vDash^\tlimbound_{s'}\psi$ and $s'(\X)=(\opM{\psi}_{\X,s',\tlimbound})_\nu^{\gamma-1}$.  Let $A:=(\opM{\psi}_{\X,s',\tlimbound})_\nu^{\gamma}$. As $\mathcal{M},w\vDash^\tlimbound_{s'}\psi$, we have $w\in\opM{\psi}_{\X,s',\tlimbound}(s'(\X))=\opM{\psi}_{\X,s',\tlimbound}((\opM{\psi}_{\X,s',\tlimbound})_\nu^{\gamma-1})=A$.
Let $s=s'[A/\X]$, whence $w\in A = s(\X)$ and thus $\mathcal{M},w\vDash^\tlimbound_{s}\X$. Now $s(\X)=(\opM{\psi}_{\X,s',\tlimbound})_\nu^{\gamma}=(\opM{\psi}_{\X,s,\tlimbound})_\nu^{\gamma}$ and $s(\Y)=s'(\Y)$ for all $\Y\in\subf(\varphi_0)\setminus\{\X\}$. Therefore ($\star$) holds for $(w,\X,c)$. 
Suppose then that $\gamma$ is a limit ordinal. Let $s_0$ be the assignment corresponding to Abelard's choice $\gamma'=0$. Let $A:=(\opM{\psi}_{\X,s_0,\tlimbound})_\nu^{\gamma}$, whence  $A=\bigcap_{\delta<\gamma}(\opM{\psi}_{\X,s_0,\tlimbound})_\nu^{\delta}$. For the sake of proving that $w\in A$, let $\delta<\gamma$. Now there is some suitable $s_\delta$ such that $\mathcal{M},w\vDash^\tlimbound_{s_\delta}\psi$ and $s_\delta(\X)=(\opM{\psi}_{\X,s_\delta,\tlimbound})_\nu^{\delta}$.
Note that $s_0$ and $s_{\delta}$ agree on all other label symbols occurring in $\varphi_0$ except $X$.
As $\mathcal{M},w\vDash^\tlimbound_{s_\delta}\psi$, it holds that $w\in\opM{\psi}_{\X,s_\delta,\tlimbound}(s_\delta(\X))=\opM{\psi}_{\X,s_\delta,\tlimbound}((\opM{\psi}_{\X,s_\delta,\tlimbound})_\nu^{\delta})=(\opM{\psi}_{\X,s_\delta,\tlimbound})_\nu^{\delta+1} \subseteq (\opM{\psi}_{\X,s_\delta,\tlimbound})_\nu^{\delta} = (\opM{\psi}_{\X,s_0,\tlimbound})_\nu^{\delta}$. Since this holds for every $\delta<\gamma$, we have $w\in A$. Let now $s:=s_0[A/\X]$, whence ($\star$) holds for $(w,\X,c)$ by similar reasoning as above.
\end{proof}

\bibliographystyle{plain}
\bibliography{mucalc}

\end{document}